\pgfplotsset{compat=1.18}
\theoremstyle{plain}
\numberwithin{equation}{section}
\newtheorem{theorem}{Theorem}[section]
\newtheorem{lemma}[theorem]{Lemma}
\newtheorem{proposition}[theorem]{Proposition}
\newtheorem*{claim}{Claim}
\theoremstyle{definition}
\newtheorem{definition}[theorem]{Definition}
\newtheorem{example}[theorem]{Example}
\newcommand{\func}{\operatorname}
\renewcommand{\c}[1]{\langle #1 \rangle}
\newcommand{\R}{\mathbb{R}}
\newcommand{\N}{\mathbb{N}}
\newcommand{\cal}{\mathcal}
\renewcommand{\frak}{\mathfrak}
\renewcommand{\bf}{\mathbf}
\newcommand{\supp}{\func{supp}}
\newcommand{\dom}{\func{dom}}
\newcommand{\ran}{\func{ran}}
\newcommand{\A}{\cal{A}}
\newcommand{\non}{\func{non}}
\newcommand{\M}{\mathcal{M}}
\newcommand{\FIN}{\bf{FIN}}
\let\strokeL\L
\DeclareRobustCommand{\L}{\ifmmode\mathbf{L}\else\strokeL\fi}
\newenvironment{midproof}[1][Proof]
  {\begin{proof}[#1]}
  {\end{proof}}
\DeclareFontShape{OT1}{cmr}{bx}{sc}{<-> cmbcsc10}{}
\title{Mad families of Gowers' infinite block sequences}
\author{Clement Yung}
\begin{document}
\begin{abstract}
    Call a subset of $\FIN_k$ \emph{small} if it does not contain a copy of $\c{A}$ for some infinite block sequence $A \in \FIN_k^{[\infty]}$. Gowers' $\FIN_k$ theorem asserts that the set of small subsets of $\FIN_k$ forms an ideal, so it is sensible to consider almost disjoint families of $\FIN_k$ with respect to the ideal of small subsets of $\FIN_k$. We shall show that $\frak{a}_{\FIN_k}$, the smallest possible cardinality of an infinite mad family of $\FIN_k$, is uncountable.
\end{abstract}

\maketitle

\section{Introduction}
For each $k \in \omega \setminus \{0\}$, let $\FIN_k$ denote the set of all mappings $x : \omega \to \{0,\dots,k\}$ such that $\supp(x)$ is finite and $k \in \ran(p)$. The \emph{support} of some $x \in \FIN_k$ is the set of all $n \in \dom(x)$ in which $x(n) \neq 0$. For two such maps $x,y$ of disjoint support, we may define a partial binary operation $+$ on $\FIN_k$ by stipulating that:
\begin{align*}
    (x + y)(i) = 
    \begin{cases}
        x(i), &\text{if $i \in \supp(p)$}, \\
        y(i), &\text{if $i \in \supp(q)$}, \\
        0, &\text{otherwise}. 
    \end{cases}
\end{align*}
Note that $(\FIN_k,+)$ forms a partial semigroup. Similar to the previous section, we may define a partial order $<$ on $\FIN_k$, where for any $x,y \in \FIN_k$:
\begin{align*}
    x < y \iff \max(\supp(x)) < \min(\supp(y)).
\end{align*}
A \emph{block sequence} of elements of $\FIN$ is thus a $<$-increasing sequence. We also define the \textit{tetris operation} $T : \FIN_k \to \FIN_{k-1}$ by stipulating that:
\begin{align*}
    T(x)(i) := \max\{x(i) - 1,0\}.
\end{align*}
For each such $\alpha$, the space of block sequences of length $\alpha$ is denoted by $\FIN_k^{[\alpha]}$ (so $\FIN_k^{[\infty]}$ is the space of all infinite block sequences, and $\FIN_k^{[<\infty]} := \bigcup_{n<\omega} \FIN_k^{[n]}$). Given a finite block sequence $A = (x_n)_{n < N}$, the \textit{partial semigroup} generated by $A$, denoted as $\c{A}$ or $\c{x_0,\dots,x_{N-1}}$, can be written in one of the following equivalent ways:
\begin{align*}
    \c{A} &:= \{T^{\lambda_0}(x_0) + \cdots + T^{\lambda_{N-1}}(x_{N-1}) : \lambda_i = 0 \text{ for some } i < N\}, \\
    &= \{x = T^{\lambda_0}(x_0) + \cdots + T^{\lambda_{N-1}}(x_{N-1}) : k \in \ran(x)\}.
\end{align*}
If $A = (x_n)_{n<\omega}$ is an infinite block sequence, then:
\begin{align*}
    \c{A} := \bigcup_{N<\omega} \c{x_0,\dots,x_{N-1}}.
\end{align*}
This allows us to define a partial order $\leq$ on $\FIN_k^{[\infty]}$, where $A \leq B$ iff $\c{A} subseteq \c{B}$.

The Ramsey theory of $\FIN_k$ was first explored by Gowers \cite{G92} in proving the oscillation stability of uniformly continuous functions $f : S_{c_0} \to \R$, where $S_{c_0}$ is the unit sphere in $c_0$. The current formulation of infinite block sequences is also due to Todor\v{c}evi\'{c} in \cite{T10}.

\begin{theorem}[Gowers $\FIN_k$ Theorem]
    For any $Y \subseteq \FIN_k$ and $A \in \FIN_k^{[\infty]}$, there exists some $B \leq A$ such that $\c{B} \subseteq Y$ or $\c{B} \subseteq Y^c$.
\end{theorem}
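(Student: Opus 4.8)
The plan is to run the idempotent-ultrafilter argument, in the spirit of the Galvin--Glazer proof of Hindman's theorem, adapted to the partial semigroup $(\FIN_k,+)$ together with the tetris operation; this is essentially the proof in Todor\v{c}evi\'c \cite{T10} following Gowers \cite{G92}. Fix $k \ge 1$. It helps to assemble all levels at once: let $G := \bigcup_{1 \le i \le k} \FIN_i$, the set of finitely supported nonzero maps $\omega \to \{0,\dots,k\}$, still a partial semigroup under $+$, and graded by the semilattice $(\{1,\dots,k\},\max)$ in that $\FIN_i + \FIN_j \subseteq \FIN_{\max(i,j)}$; also $T[\FIN_i] = \FIN_{i-1}$ for $i \ge 2$. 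For each $i$ let $\gamma\FIN_i$ be the space of \emph{cofinite-type} ultrafilters on $\FIN_i$, i.e.\ those $\U$ with $\{p : \min\supp(p) > n\} \in \U$ for all $n$; this is a nonempty compact subset of the Stone space of $\FIN_i$, and more generally $\gamma\FIN_{\le i} := \gamma\bigl(\bigcup_{j \le i}\FIN_j\bigr)$ is compact. Since members of such ultrafilters contain blocks of arbitrarily large minimum support, the partial operation lifts to a total operation on cofinite-type ultrafilters by
\[
A \in \U + \mathcal{W} \iff \{p : \{q : p < q \text{ and } p + q \in A\} \in \mathcal{W}\} \in \U,
\]
under which $\gamma\FIN_{\le i}$ becomes a compact right-topological semigroup having $\gamma\FIN_i$ as a two-sided closed ideal (immediate from the grading). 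The tetris map extends to a continuous surjective semigroup homomorphism $T : \gamma\FIN_i \to \gamma\FIN_{i-1}$ for $i \ge 2$; surjectivity holds because $T$ is split by the map that increases every nonzero value of a block by $1$, which preserves $\min\supp$.

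The engine of the proof is a coherent system of idempotents: idempotents $\U_1 \in \gamma\FIN_1,\dots,\U_k \in \gamma\FIN_k$ with $T(\U_{i+1}) = \U_i$ for $1 \le i < k$, and with the absorption law $\U_i + \U_j = \U_j + \U_i = \U_{\max(i,j)}$ for all $i,j$ (this makes sense as all $\U_i$ live inside $\gamma\FIN_{\le k}$). These are built by induction on the level. By Ellis's theorem, the compact right-topological semigroup $\gamma\FIN_1$ has an idempotent $\U_1$. Given $\U_i$, one works in $\gamma\FIN_{\le i+1}$: using that $\gamma\FIN_{i+1}$ is an ideal, $\U_i + \gamma\FIN_{i+1} + \U_i$ is contained in $\gamma\FIN_{i+1}$ and is a subsemigroup on which $\U_i$ acts as a two-sided identity; intersecting it with the nonempty (by surjectivity of $T$) closed subsemigroup $\{\mathcal{W} \in \gamma\FIN_{i+1} : T(\mathcal{W}) = \U_i\}$ and applying Ellis's theorem inside a suitable nested closed subsemigroup yields an idempotent $\U_{i+1}$ with $T(\U_{i+1}) = \U_i$ and $\U_i + \U_{i+1} = \U_{i+1} + \U_i = \U_{i+1}$. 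The full absorption law follows by a short induction from $T$-coherence and idempotence. Carrying this through correctly --- in particular respecting the failure of continuity of left multiplication, and verifying that every intermediate family really is a closed subsemigroup --- is where nearly all of the work goes.

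Granting the system, fix $X \subseteq \FIN_k$ and $P \in \FIN_k^{[\infty]}$, and relativize to $P$: let $G_P := \{T^j(q) : q \in \c{P},\ 0 \le j < k\}$, which one checks is a partial subsemigroup of $G$ graded by $\max$ and closed under $T$ modulo the zero map, with $G_P \cap \FIN_k = \c{P}$; rerun the construction with each $\FIN_i$ replaced by $\FIN_i \cap G_P$ to obtain coherent idempotents $\U_i$ concentrating on $G_P$. Since $\U_k$ is an ultrafilter, we may assume $X \in \U_k$ (else replace $X$ by $\FIN_k \setminus X$). Now recursively build a block sequence $Q = (q_n)_{n<\omega}$ with $q_n \in \c{P}$ and $q_n < q_{n+1}$, maintaining for each $i$ a $\U_i$-large set $A_n^{(i)} \subseteq \FIN_i \cap G_P$, coherent under the tetris maps, with the invariant that every partial sum $u = T^{j_0}(q_{n_0}) + \dots + T^{j_m}(q_{n_m})$ ($n_0 < \dots < n_m < n$, $\min_\ell j_\ell = 0$) formed so far lies in $X$, has minimum support below all the $A_n^{(i)}$, and satisfies $u + A_n^{(i)} \subseteq A_n^{(\max(\mathrm{lvl}(u),i))}$ for all $i$ --- this last clause being exactly what the absorption law $\U_i + \U_j = \U_{\max(i,j)}$ delivers, since $u$ corresponds to a finite product that is absorbed by $\U_{\mathrm{lvl}(u)}$. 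Because $X$ is the ``top'' large set (it lies in $\U_k$) and every completed element of $\c{Q}$ is a sum whose level is $k$, the invariant forces all of $\c{Q}$ into $X$; at stage $n$ one simply picks $q_n$ from the (nonempty, since $\U_k$-large) relevant intersection with $\min\supp(q_n)$ beyond $q_{n-1}$, and updates the $A_{n+1}^{(i)} \subseteq A_n^{(i)}$ accordingly. Then $\c{Q} \subseteq X$ and $Q \le P$, as required.

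The main obstacle is therefore the construction and deployment of the coherent idempotent chain: producing the $\U_i$ with the two-sided cross-absorption requires the ideal structure of $\gamma\FIN_i$ in $\gamma\FIN_{\le i}$ plus a careful nested-subsemigroup argument to route around the non-continuity of left multiplication; and the recursion must be arranged so that the several largeness demands at each stage --- one per tetris level, linked through the absorption law --- collapse to the single compatible condition $X \in \U_k$. Everything else --- compactness and right-topological structure of $\gamma\FIN_{\le k}$, continuity and surjectivity of $T$, the ideal property, the relativization to $G_P$ --- is routine bookkeeping once the definitions are in place.
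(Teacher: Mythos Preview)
The paper does not prove this statement: Gowers' $\FIN_k$ theorem is quoted as background, attributed to \cite{G92} with the present formulation credited to \cite{T10}, and is used only as a black box (to verify that the small subsets of $\FIN_k$ form an ideal). There is therefore no ``paper's own proof'' to compare against.

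That said, your outline is the standard idempotent-ultrafilter argument from the very references the paper cites, and its architecture is correct: one builds a $T$-coherent chain of cofinite idempotent ultrafilters $\U_1,\dots,\U_k$ satisfying the cross-absorption $\U_i + \U_j = \U_{\max(i,j)}$, decides which side of the partition is $\U_k$-large, and then runs a Galvin--Glazer style extraction simultaneously at all tetris levels. Your relativization to $P$ via $G_P$ is also the right move (and the paper's own notation $\c{P}^*$ is essentially your $G_P$ together with the zero map). The points you flag as ``where nearly all of the work goes'' are indeed the delicate parts --- in particular, showing that $T^{-1}(\U_i) \cap (\U_i + \gamma\FIN_{i+1} + \U_i)$ contains a closed subsemigroup on which Ellis applies requires some care with one-sided continuity, and the bookkeeping in the extraction (maintaining the family $A_n^{(i)}$ coherently under $T$) needs to be written out to be convincing --- but nothing you wrote is wrong, and a reader of \cite{T10} would recognize every step.
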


Call a subset of $Y \subseteq \FIN_k$ \emph{small} if there does not exist some $A \in \FIN_k^{[\infty]}$ such that $\c{A} \subseteq X$. This theorem implies that the set of small subsets of $\FIN_k$ forms an ideal - indeed, if $Y \cup Z \subseteq \FIN_k$ is not small, say $\c{A} \subseteq Y \cup Z$, then by the above theorem there exists some $B \leq A$ such that $\c{B} \subseteq Y$ or $\c{B} \subseteq Y^c$. In the latter case, $\c{B} \subseteq Z$. Consequently, it is reasonable to discuss almost disjoint families with respect to the ideal of small subsets of $\FIN_k$.

\begin{definition}
\hfill
    \begin{enumerate}
        \item Two subsets $Y,Z \subseteq \FIN_k$ are \emph{almost disjoint} (or simply \emph{ad}) if $Y \cap Z$ is small.

        \item Two infinite block sequences $A,B \in \FIN_k^{[\infty]}$ are \emph{almost disjoint} if $\c{A},\c{B}$ are almost disjoint.

        \item A family $\A \subseteq \FIN_k^{[\infty]}$ is an \emph{almost disjoint family} if for all $A \neq B$ in $\A$, $A$ and $B$ are almost disjoint. If $\A$ is maximal under inclusion, then $\A$ is called a \emph{maximal almost disjoint family}, or simply a \emph{mad family}.
    \end{enumerate}
\end{definition}
For $k = 1$, $A,B \in \FIN^{[\infty]}$ are almost disjoint iff $\c{A} \cap \c{B}$ are finite, so the uncountability of the cardinal $\frak{a}_\FIN$ is immediate. This particular cardinal was studied in \cite{BG17}, where it was shown that the inequality $\non(\M) \leq \frak{a}_\FIN$. Consequently, $\frak{a} < \frak{a}_\FIN$ in the random model. 

The uncountability of the cardinal $\frak{a}_{\FIN_k}$ is less apparent for $k \geq 2$. Unlike the case of $\FIN$, it is not true that if $A,B \in \FIN_k^{[\infty]}$ are almost disjoint, then $\c{A} \cap \c{B}$ is finite.

\begin{example}
\label{ex:intersection.small.but.infinite}
    Let $A = (x_n)_{n<\omega}$ and $B = (y_n)_{n<\omega}$ be defined as follows:
    \begin{enumerate}
        \item $\supp(x_0) = \supp(y_0) = \{0\}$, and $x_0(0) = y_0(0) = 2$.
        
        \item For all $n > 0$, $\supp(x_n) = \{2n-1\}$ and $x_n(2n-1) = 2$.
        
        \item For all $n > 0$, $\supp(y_n) = \{2n-1,2n\}$, $y_n(2n-1) = 2$ and $y_n(2n) = 1$.
    \end{enumerate}
    Observe that for all $n > 0$, $T(x_n) = T(y_n)$. Therefore, for any $n$, we have that:
    \begin{align*}
        z_n := x_0 + T(x_n) = y_0 + T(y_n) \in \c{A} \cap \c{B},
    \end{align*}
    and $\max(\supp(z)) = 2n-1$. Therefore, $\sup\{g(x) : x \in \c{A} \cap \c{B}\} = \infty$. We shall show that $\c{A} \cap \c{B/1} = \emptyset$, so $A$ and $B$ are almost disjoint by Lemma \ref{lem:ad.iff.tail.disjoint}.

    Suppose otherwise, and let $z \in \c{A} \cap \c{B/1}$. Then $z = \sum_{m \geq 1} T^{\mu_m}(y_m)$, and $\mu_m = 0$ for some $m > 0$. This implies that $2m \in \supp(y_m) \subseteq \supp(z)$. However, observe that $2m \notin \supp(A)$. Since $z \in \c{A}$, $\supp(z) \subseteq \supp(A)$ so $2m \notin \supp(z)$, a contradiction.
\end{example}

The main theorem of this paper is thus the following:

\begin{theorem}
\label{thm:a_FINk.is.uncountable}
    For all $k \geq 1$, $\frak{a}_{\FIN_k} \geq \aleph_1$.
\end{theorem}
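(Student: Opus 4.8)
Since $\frak{a}_{\FIN_k}$ is by definition the least cardinality of an infinite maximal almost disjoint family, it suffices to show that no countably infinite almost disjoint family is maximal. So fix such a family $\A = \{A_n : n < \omega\} \subseteq \FIN_k^{[\infty]}$, write $A_n = (p^n_\ell)_{\ell < \omega}$, and aim to construct a single $Q = (q_j)_{j<\omega} \in \FIN_k^{[\infty]}$ with $\c{Q} \cap \c{A_n}$ small for every $n$. Then $\A \cup \{Q\}$ is an almost disjoint family properly extending $\A$ (properly, because $\c{Q} \cap \c{Q} = \c{Q}$ is not small, so $Q \notin \A$), contradicting maximality.

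The enabling observation is algebraic. Suppose $w \in \c{Q} \cap \c{A_n}$ and $w = T^{j_0}(p^n_{\ell_0}) + \cdots + T^{j_m}(p^n_{\ell_m})$ with $\ell_0 < \cdots < \ell_m$ and $\min_i j_i = 0$. The block $p^n_{\ell_i}$ occurring with exponent $0$ then satisfies $\supp(p^n_{\ell_i}) \subseteq \supp(w) \subseteq \supp(Q)$, where $\supp(Q) := \bigcup_j \supp(q_j)$, and $w \restrictedto \supp(p^n_{\ell_i})$ equals $p^n_{\ell_i}$. Call an $A_n$-block \emph{captured} if its support is contained in $\supp(Q)$. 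Then $\c{Q} \cap \c{A_n}$ is contained in the union, over captured $A_n$-blocks $p^n_\ell$, of the ``slices'' $S_\ell := \{w \in \FIN_k : w \restrictedto \supp(p^n_\ell) = p^n_\ell\}$. If only finitely many $A_n$-blocks are captured this is a finite union of slices, and no finite union of slices can contain a set of the form $\c{R}$: the blocks of $R$ would all have to agree with some fixed $p^n_\ell$ on $\supp(p^n_\ell)$, forcing infinitely many of them to have mutually overlapping supports. Hence $\c{Q} \cap \c{A_n}$ is small, and it suffices to build $Q$ so that for every $n$ only finitely many $A_n$-blocks are captured.

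This I would do by a diagonal recursion on $j$: having committed $q_0 < \cdots < q_{j-1}$, consult $A_0, \dots, A_j$ and choose $q_j > q_{j-1}$ (with some value equal to $k$, so that $q_j \in \FIN_k$) together with a ``gap'' after it forcing the next $A_n$-block of each $n \le j$ to acquire a support point that will lie outside $\supp(Q)$. The naive attempt --- taking each $q_j$ a value-$k$ singleton whose position avoids the block-supports of $A_0,\dots,A_j$ --- handles many families but genuinely fails: the two block sequences of value-$k$ singletons supported on the even, respectively the odd, integers are almost disjoint (their partial subsemigroups are in fact disjoint), yet any $X \subseteq \omega$ capturing only finitely many singletons of each is finite. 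The remedy --- and what I expect to be the technical heart of the proof --- is to let the $q_j$ have larger, carefully chosen support and to use the values below $k$ in the manner of Example~\ref{ex:intersection.small.but.infinite}: manipulating what $q_j$ does on the non-top part of its support creates local incompatibilities with each $\c{A_n}$ that propagate along $\c{Q}$. Almost disjointness of $\A$ is exactly what prevents the recursion from stalling --- e.g.\ if two members of $\A$ had value-$k$ singletons at cofinitely many positions, those position sets would meet infinitely and the corresponding partial subsemigroups would share a common copy of some $\c{R}$, contradicting almost disjointness. The case $k=1$, in which ``small'' is the same as ``finite'' and the whole construction collapses to the classical diagonalisation proving $\frak{a} \ge \aleph_1$, should be settled first as a template.
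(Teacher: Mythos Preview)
Your first two paragraphs are correct: the observation that every $w\in\c{Q}\cap\c{A_n}$ lies in some slice $S_\ell$ with $p^n_\ell$ captured, and that a finite union of such slices is small, are both sound. The gap is that this sufficient condition --- ``only finitely many $A_n$-blocks are captured for each $n$'' --- cannot in general be achieved, and you yourself supply the counterexample. If $A_0,A_1$ are the value-$k$ singletons at the even and odd integers, then any infinite block sequence $Q$ has infinite support and therefore captures infinitely many blocks of at least one of $A_0,A_1$. So the target of your recursion is simply unattainable for this family; the approach, as formulated, dies here rather than merely getting harder.

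Your proposed remedy does not rescue it. ``Using values below $k$'' cannot reduce the number of captured blocks, because capture is a statement purely about supports: whatever $q_j$ does on the non-top part of its support, $\supp(q_j)$ is unchanged and the same $A_n$-blocks are captured. What you would need is a \emph{different} sufficient condition for smallness of $\c{Q}\cap\c{A_n}$, one that can be maintained through a recursion; the proposal does not supply one.

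The paper proceeds along exactly such a line, but with a specific numerical invariant in place of your capture count. Define $f(p)=\max\{m:p(m)=k\}$ and $F(X)=\sup_{p\in X}f(p)$. The key lemma is that $F(\c{P}\cap\c{Q})<\infty$ iff $P,Q$ are almost disjoint; the nontrivial direction requires a structural analysis of $\c{P}\cap\c{Q}$ (a ``stacking'' operation and a notion of \emph{intertwined} block). Granted this, the diagonalisation chooses $q_n\in\c{P_n}$ far enough out that adding it does not increase $F(\c{q_0,\dots,q_n}\cap\c{P_i})$ for any $i<n$; almost disjointness of $P_n$ from each earlier $P_i$ is precisely what makes such a choice possible. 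In your even/odd example this yields a $Q$ whose support contains infinitely many even and infinitely many odd points --- so infinitely many captured blocks of each $A_i$ --- yet with $F(\c{Q}\cap\c{A_i})<\infty$, hence $\c{Q}\cap\c{A_i}$ small. The moral is that capture count is too crude an invariant; the position of the last top value is the right one.
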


\section{Proof of Theorem \ref{thm:a_FINk.is.uncountable}}
Fix some $k \geq 1$. Consider a function $f : \FIN_k \to \N$ satisfying the following two properties:
\begin{enumerate}
    \item For all $A,B \in \FIN_k^{[\infty]}$, $A$ and $B$ are almost disjoint iff $\sup\{f(x) : x \in \c{A} \cap \c{B}\} < \infty$.
    
    \item For all $A,B \in \FIN_k^{[\infty]}$ such that $A$ and $B$ are almost disjoint, and $a \in \FIN_k^{[<\infty]}$, there exists some $M < \omega$ such that $a < x$ for all $x \in \c{B/M}$, and:
    \begin{align*}
        \sup\{f(y) : y \in \c{a^\frown x} \cap \c{A}\} = \sup\{f(y) : y \in \c{a} \cap \c{A}\}.
    \end{align*}
    Here, if $B = (y_n)_{n<\omega}$ we denote $B/M := (y_n)_{n\geq M}$.
\end{enumerate}

\begin{lemma}
    If there exists a function $f : \FIN_k \to \N$ satisfying the above two properties, then every infinite mad family $\A \subseteq \FIN_k^{[\infty]}$ is uncountable.
\end{lemma}

\begin{proof}
    Let $\A := \{A_n : n < \omega\}$ be a countable almost disjoint family of infinite block sequences. We shall show that $\A$ is not maximal by constructing some $B = (y_n)_{n<\omega}$ that is almost disjoint from $A_n$ for all $n < \omega$ inductively as follows: If $a := (y_i)_{i<n}$ has been defined, then by property (2) of $f$. Let $M$ be large enough so that for all $x \in \c{A_n/M}$ and $i < n$:
    \begin{align*}
        \sup\{f(y) : y \in \c{a^\frown x} \cap \c{A_i}\} = \sup\{f(y) : y \in \c{a} \cap \c{A_i}\}.
    \end{align*}
    Let $y_n \in \c{A_n/M}$ be any block. Then, for all $n$:
    \begin{align*}
        \sup\{f(x) : x \in \c{A_n} \cap \c{B}\} &= \sup_{i<\omega} \sup\{f(x) : x \in \c{A_n} \cap \c{y_0,\dots,y_{i-1}}\} \\
        &= \sup\{f(x) : x \in \c{A_n} \cap \c{y_0,\dots,y_n}\} \\
        &< \infty,
    \end{align*}
    so by property (1) of $f$, $B$ is almost disjoint from $A_n$ for all $n < \omega$.
\end{proof}

Therefore, the following proposition would be sufficient to prove Theorem \ref{thm:a_FINk.is.uncountable}:

\begin{proposition}
\label{prop:FIN_k.iterable.valuation}
    The map $f : \FIN_k \to \N$ defined by:
    \begin{align*}
        f(x) := \max\{i \in \supp(x) : x(i) = k\}
    \end{align*}
    satisfies properties (1) and (2).
\end{proposition}

\begin{lemma}
\label{lem:ad.iff.tail.disjoint}
    Let $A,B \in \FIN_k^{[\infty]}$. Then $A$ and $B$ are almost disjoint iff there exists some $N$ such that $\c{A/N} \cap \c{B} = \emptyset$.
\end{lemma}

\begin{proof}
    We write $A = (x_n)_{n<\omega}$.

    \underline{$\impliedby$:} Suppose $\c{C} \subseteq \c{A} \cap \c{B}$ for some $C \in \FIN_k^{[\infty]}$. Write $C = (z_n)_{n<\omega}$. Then, for any $N$, there exists some $M$ such that $z_M > x_{n-1}$. Therefore, $z_n \in \c{A/N} \cap \c{B} \neq \emptyset$.

    \underline{$\implies$:} Suppose on the contrary that $\c{A/N} \cap \c{B} \neq \emptyset$ for all $N$. We define $z_n \in \FIN_k$ inductively as follows: Let $z_0 \in \c{A} \cap \c{B}$ be any block. If $z_n$ has been defined, then let $N$ be large enough so that $x_N > z_n$, and let $z_{n+1} \in \c{A/N} \cap \c{B}$. We have $z_{n+1} > z_n$, so if $C = (z_n)_{n<\omega} \in \FIN_k^{[\infty]}$, then $\c{C} \subseteq \c{A} \cap \c{B}$, as desired.
\end{proof}

\begin{lemma}
\label{lem:FIN_k.iterable.valuation.main.lemma}
    Let $A,B \in \FIN_k^{[\infty]}$, and suppose that:
    \begin{align*}
        \sup\{f(x) : x \in \c{A} \cap \c{B}\} = \infty.
    \end{align*}
    Then there exists some $N > 0$ such that:
    \begin{align*}
        \sup\{f(x) : x \in \c{A/N} \cap \c{B}\} = \infty.
    \end{align*}
\end{lemma}

\begin{proof}
    Let $A = (x_n)_{n<\omega},B = (y_n)_{n<\omega} \in \FIN_k^{[\infty]}$, and suppose that $\sup\{f(x) : x \in \c{A} \cap \c{B}\} = \infty$. For any $z \in \c{A} \cap \c{B}$, we may write:
    \begin{align*}
        z = \sum_{n<\omega} T^{\lambda_n}(x_n) = \sum_{m<\omega} T^{\mu_m}(y_m),
    \end{align*}
    where $\lambda_n,\mu_m \leq k$ for all $n,m$, $\lambda_n,\mu_m = k$ for all but finitely many $n,m$, and $\lambda_n < k$ for some $n$ and $\mu_m < k$ for some $m$. Let $G_z := (V_0 \sqcup V_1,E)$ be the bipartite graph where:
    \begin{enumerate}
        \item $V_0 := \{n < \omega : \lambda_n < k\}$.
        
        \item $V_1 := \{m < \omega : \mu_m < k\}$.
        
        \item $(n,m) \in E$ iff $\supp(T^{\lambda_n}(x_n)) \cap \supp(T^{\mu_m}(y_m)) \neq \emptyset$.
    \end{enumerate}

    We say that an element $x \in \c{A} \cap \c{B}$ is \emph{prime}\footnote{In an older version of this note, this property was named \emph{intertwined}.} if $G_z$ is connected.

    \begin{claim}
        There exists a prime $p \in \c{A} \cap \c{B}$.
    \end{claim}

    \begin{midproof}
        Let $N$ is the least number such that $\c{x_0,\dots,x_{N-1}} \cap \c{B} \neq \emptyset$, and let $z \in \c{x_0,\dots,x_{N-1}} \cap \c{B}$ be any element. Write $z = \sum_{n<N} T^{\lambda_n}(x_n) = \sum_{m<\omega} T^{\mu_m}(y_m)$. Note that by the minimality of $N$, $\lambda_{N-1} < k$. Let $M$ be the largest integer such that $\mu_{M-1} < k$, so we have that $z \in \c{x_0,\dots,x_{N-1}} \cap \c{y_0,\dots,y_{M-1}}$. Let $N' < N$ be the largest integer such that $\min(\supp(T^{\lambda_{N'}}(x_{N'}))) = \min(\supp(T^{\mu_{M'}}(y_{M'})))$ for some $M'$. Note that this is equivalent to saying that $N' < N$ is the largest integer such that $\max(\supp(T^{\lambda_{N'-1}}(x_{N'-1}))) = \max(\supp(T^{\mu_{M'-1}}(y_{M'-1})))$. Consider writing $z$ in the following manner:
        \begin{alignat*}{3}
            z &= \underbrace{\sum_{n < N'} T^{\lambda_n}(x_n)}_{z_0^0} &&\; + &&\underbrace{\sum_{n \geq N'} T^{\lambda_n}(x_n)}_{z_1^0}, \\
            z &= \underbrace{\sum_{m < M'} T^{\mu_m}(y_m)}_{z_0^1} &&\; + &&\underbrace{\sum_{m \geq M'} T^{\mu_m}(y_m)}_{z_1^1}.
        \end{alignat*}
        We have the following:
        \begin{enumerate}
            \item $z = \supp(z_0^0) \cup \supp(z_1^0) = \supp(z_0^1) \cup \supp(z_1^1)$.
            
            \item $\max(\supp(z_0^0)) < \min(\supp(z_1^0))$.
            
            \item $\max(\supp(z_0^1)) < \min(\supp(z_1^1))$.
        \end{enumerate}
        Since $\min(\supp(z_1^0)) = \min(\supp(z_1^1))$, we have that $\max(\supp(z_0^0)) = \max(\supp(z_0^1))$. Therefore, $z_0^0 = z_0^1$ and $z_1^0 = z_1^1$. We denote them as $z_0$ and $z_1$ respectively. Observe that:
        \begin{align*}
            \ran(z) = \ran(z_0) \cup \ran(z_1).
        \end{align*}
        If $k \notin \ran(z_1)$, then $k \in \ran(z_0)$, so $z_0 \in \c{x_0,\dots,x_{N'-1}} \cap \c{B}$. This contradicts the minimality of $N$ as $N' < N$, so $k \in \ran(z_1)$. Therefore, $z_1 \in \c{x_0,\dots,x_{N-1}} \cap \c{B}$. We shall show that $G_{z_1}$ is connected, so $z_1$ is prime.

        Write $G_{z_1} = (V_0' \sqcup V_1',E')$, where $V_0' \subseteq V_0$ and $V_1' \subseteq V_1$. Observe that:
            \begin{align*}
                V_0 &= \{n \in V_0 : n \geq N'\}, \\
                V_1 &= \{m \in V_1 : m \geq M'\}.
            \end{align*}
            Write $V_0 = \{n_0,\dots,n_{s-1}\}$ and $V_1 = \{m_0,\dots,m_{t-1}\}$, where $N' = n_0 < \cdots < n_{s-1} = N - 1$ and $M' = m_0 < \cdots < m_{t-1} = M - 1$. For $u < s$ and $v < t$, let $H_{u,v}$ be the induced subgraph of $G_{z_1}$ with vertices $\{n_0,\dots,n_u\} \sqcup \{m_0,\dots,m_v\}$. We shall show by induction that $H_{u,v}$ is connected for all $u,v$. For the base case, $H_{0,0}$ is connected as $\min(\supp(T^{\lambda_{N'}}(x_{N'}))) = \min(\supp(T^{\mu_{M'}}(y_{M'})))$. Suppose that $H_{u,v}$ is connected. If $u = s - 1$ (and so $v = t - 1$), then we're done, so assume otherwise. By the maximality of $N'$, we have that $\max(\supp(T^{\lambda_{n_u}}(x_{n_u}))) \neq \max(\supp(T^{\mu_{m_v}}(y_{m_v})))$. Assume WLOG that $\max(\supp(T^{\lambda_{n_u}}(x_{n_u}))) < \max(\supp(T^{\mu_{m_v}}(y_{m_v})))$, and the other case is similar. This implies that:
            \begin{align*}
                \min(\supp(T^{\lambda_{n_{u+1}}}(x_{n_{u+1}}))) \in \supp(T^{\mu_{m_v}}(y_{m_v})),
            \end{align*}
            so $(n_{u+1},m_v) \in E$. Since $H_{u,v}$ is connected and $n_{u+1}$ is connected to $H_{u,v}$, $H_{u+1,v}$ is also connected, completing the induction.
    \end{midproof}

    Before we proceed, we first introduce a binary operation $\star$ on $\FIN_k$, where for any $x,y \in \FIN_k$ and $i < \omega$:
    \begin{align*}
        (x \star y)(i) := \max\{x(i),y(i)\}.
    \end{align*}
    We have that $\supp(x \star y) = \supp(x) \cup \supp(y)$, which is finite, and if $x(i) = k$ or $y(i) = k$, then $(x \star y)(i) = k$. Therefore, $\FIN_k$ is closed under $\star$. It's easy to verify that if $z,w \in \c{A}$, where $z = \sum_{n<\omega} T^{\lambda_n}(x_n)$ and $w = \sum_{n<\omega} T^{\lambda_n'}(x_n)$, then:
    \begin{align*}
        z \star w = \sum_{n<\omega} T^{\min\{\lambda_n,\lambda_n'\}}(x_n).
    \end{align*}
    In particular, $\c{A}$ (and similarly $\c{B}$) is closed under $\star$. It's also easy to see that for any $z,w \in \c{A} \cap \c{B}$:
    \begin{align*}
        f(z \star w) = \max\{f(z),f(w)\}.
    \end{align*}
    
    We now fix some prime $p \in \c{A} \cap \c{B}$, and write:
    \begin{align*}
        p = \sum_{u<s} T^{\lambda_{n_u}}(x_{n_u}) = \sum_{m<\omega} T^{\mu_{m_v}}(y_{m_v}),
    \end{align*}
    where $n_0 < \cdots < n_{s-1}$, $m_0 < \cdots < m_{t-1}$ and $\lambda_{n_u},\mu_{m_v} < k$ for all $u < s$ and $v < t$.

    \begin{claim}
        For any $z \in \c{A} \cap \c{B}$, if $i \in \supp(z)$ and $\min(\supp(p)) \leq i \leq \max(\supp(p))$, then $(p \star z)(i) = p(i)$. 
    \end{claim}

    \begin{midproof}
        Let $z \in \c{A} \cap \c{B}$, and we write:
        \begin{align*}
            z = \sum_{n<\omega} T^{\lambda_n'}(x_n) = T^{\mu_m'}(y_m).
        \end{align*}
        Let $i \in \supp(z)$, so $i \in \supp(T^{\lambda_n'}(x_n)) \cap \supp(T^{\mu_m'}(y_m))$ for some $n,m$. We claim that $n = n_u$ for some $u < s$ or $m = m_v$ for some $v < s$. Otherwise, since $\min(\supp(p)) \leq i \leq \max(\supp(p))$, there exists some $u < s - 1$ and $v < t - 1$ such that $n_u < n < n_{u+1}$ and $m_v < m < m_{v+1}$. This implies that:
        \begin{gather*}
            \max(\supp(T^{\lambda_{n_u}}(x_{n_u}))) < i < \min(\supp(T^{\lambda_{n_{u+1}}}(x_{n_{u+1}}))), \\
            \max(\supp(T^{\mu_{m_v}}(y_{m_v}))) < i < \min(\supp(T^{\mu_{m_{v+1}}}(y_{m_{v+1}}))).
        \end{gather*}
        Since $G_p$ is connected, either there exists some $u' \leq u$ and $v' \geq v+1$ such that $\supp(T^{\lambda_{n_u'}}(x_{n_{u'}})) \cap \supp(T^{\mu_{m_{v'}}}(y_{m_{v'}})) \neq \emptyset$, or there exists some $u' \geq u+1$ and $v' \leq v$ such that $\supp(T^{\lambda_{n_{u'}}}(x_{n_{u'}})) \cap \supp(T^{\mu_{m_{v'}}}(y_{m_{v'}})) \neq \emptyset$. In the first case, let $j \in \supp(T^{\lambda_{n_u'}}(x_{n_{u'}})) \cap \supp(T^{\mu_{m_{v'}}}(y_{m_{v'}}))$. This is a contradiction, as $j \in \supp(T^{\lambda_{n_u'}}(x_{n_{u'}}))$ implies that $j < i$, but $j \in \supp(T^{\mu_{m_{v'}}}(y_{m_{v'}}))$ implies that $j > i$. We may obtain a similar contradiction in the second case. 

        Assume WLOG that $n = n_u$ for some $u < s$. As discussed prior to the lemma, we have that:
        \begin{align*}
            p \star z = \sum_{u<s} T^{\min\{\lambda_{n_u},\lambda_{n_u}'\}}(x_{n_u}) = \sum_{v<t} T^{\min\{\mu_{m_v},\mu_{m_v}'\}}(y_{m_v}).
        \end{align*}
        Suppose for a contradiction that $(p \star z)(i) > p(i)$, i.e. $\lambda_{n_u}' < \lambda_{n_u}$. We claim that for all $u' < s$, $\lambda_{n_{u'}}' < \lambda_{n_{u'}}$. This leads to our desired contradiction, as there exists some $u' < s$ be such that $\lambda_{n_{u'}} = 0$.

        Since $G_p$ is connected, there is a path between $n_{u'}$ and $n_u$ of length $l$. Note that $l$ is even, as $G_p$ is bipartite. We derive a contradiction by induction on $l$. If $l = 0$, then $n_{u'} = n_u$, so $\lambda_{n_u}' < \lambda_{n_u}$. Now suppose that $l = l' + 2$. Let $v' < t$ and $u'' < s$ such that there is a path of length $l$ between $n_{u'}$ and $n_u$, and:
        \begin{align*}
            \supp(T^{\lambda_{u'}}(x_{u'})) \cap \supp(T^{\mu_{v'}}(y_{v'})) &\neq \emptyset, \\
            \supp(T^{\lambda_{u''}}(x_{u''})) \cap \supp(T^{\mu_{v'}}(y_{v'})) &\neq \emptyset. 
        \end{align*}
        By the induction hypothesis, $\lambda_{u''}' < \lambda_{u''}$. Let $j_0 \in \supp(T^{\lambda_{u''}}(x_{u''})) \cap \supp(T^{\mu_{v'}}(y_{v'}))$. Then:
        \begin{align*}
            T^{\min\{\mu_{v'},\mu_{v'}'\}}(y_{v'})(j_0) &= T^{\min\{\lambda_{u''},\lambda_{u''}'\}}(x_{u''})(j_0) \\
            &> T^{\lambda_{u''}}(x_{u''})(j_0) \\
            &= T^{\mu_{v'}}(y_{v'})(j_0),
        \end{align*}
        so $\mu_{v'}' < \mu_{v'}$. Now let $j_1 \in \supp(T^{\lambda_{u'}}(x_{u'})) \cap \supp(T^{\mu_{v'}}(y_{v'}))$. Similarly, we have that:
        \begin{align*}
            T^{\min\{\lambda_{u'},\lambda_{u'}'\}}(x_{u''})(j_1) &= T^{\min\{\mu_{v'},\mu_{v'}'\}}(y_{v'})(j_1) \\
            &> T^{\mu_{v'}}(y_{v'})(j_1) \\
            &= T^{\lambda_{u'}}(x_{u'})(j_1),
        \end{align*}
        so $\lambda_{u'}' < \lambda_{u'}$. This completes the induction.
    \end{midproof}

    \begin{claim}
        For any $z \in \c{A} \cap \c{B}$, there exist some $w_0,w_1 \in \c{A} \cap \c{B}$ and some $\alpha_0,\alpha_1 \leq k$ such that:
        \begin{align*}
            p \star z = T^{\alpha_0}(w_0) + p + T^{\alpha_1}(w_1),
        \end{align*}
        and $w_0 < p < w_1$.
    \end{claim}

    \begin{midproof}
        We write:
        \begin{align*}
            p \star z = \sum_{n<\omega} T^{\lambda_n''}(x_n) = \sum_{m<\omega} T^{\mu_m''}(y_m).
        \end{align*}
        By the previous claim, we have that for all $n_0 \leq n \leq n_{s-1}$, $\lambda_n'' = \lambda_n$. Therefore, we may write:
        \begin{align*}
            p \star z &= \sum_{n<n_0} T^{\lambda_n''}(x_n) + p + \sum_{n>n_{s-1}} T^{\lambda_n''}(x_n), \\
            &= \sum_{m<m_0} T^{\mu_m''}(y_m) + p + \sum_{m>m_{t-1}} T^{\mu_m''}(y_m).
        \end{align*}
        This implies that:
        \begin{align*}
            \sum_{n<n_0} T^{\lambda_n''}(x_n) &= \sum_{m<m_0} T^{\mu_m''}(y_m), \\
            \sum_{n>n_{s-1}} T^{\lambda_n''}(x_n) &= \sum_{m>m_{t-1}} T^{\mu_m''}(y_m). 
        \end{align*}
        Let $\alpha_0 := \min\{\lambda_n'' : n < n_0\}$ and $\alpha_1 := \min\{\lambda_n'' : n > n_{s-1}\}$. Then:
        \begin{align*}
            \sum_{n<n_0} T^{\lambda_n''-\alpha_0}(x_n) &= \sum_{m<m_0} T^{\mu_m''-\alpha_0}(y_m) =: w_0, \\
            \sum_{n>n_{s-1}} T^{\lambda_n''-\alpha_1}(x_n) &= \sum_{m>m_{t-1}} T^{\mu_m''-\alpha_1}(y_m) =: w_1. 
        \end{align*}
        Then $w_0,w_1 \in \c{A} \cap \c{B}$ and $p \star z = T^{\alpha_0}(w_0) + p + T^{\alpha_1}(w_1)$, as desired.
    \end{midproof}

    We may now complete the proof of the lemma. Let $N$ be the smallest positive integer such that $\c{x_0,\dots,x_{N-1}} \cap \c{B}$. Let $K$ be an arbitrary integer. Since $\sup\{f(x) : x \in \c{A} \cap \c{B}\} = \infty$, there exists some $z \in \c{A} \cap \c{B}$ such that $f(p) < f(z)$ and $K \leq f(z)$. Therefore, $f(p \star z) = f(z)$. By the final claim, we may write $p \star z = T^{\alpha_0}(w_0) + p + T^{\alpha_1}(w_1)$ for some $w_0,w_1 \in \c{A} \cap \c{B}$ and some $\alpha_0,\alpha_1$, where $w_0 < p < w_1$. If $\alpha_1 > 0$, then $k \notin \ran(T^{\alpha_1}(w_1))$, so $f(p \star z) \leq f(p)$, a contradiction. This implies that $\alpha_1 = 0$, so $p < w_1$, and therefore $w_1 \in \c{A/N} \cap \c{B}$. Hence:
    \begin{align*}
        \sup\{f(x) : x \in \c{A/N} \cap \c{B}\} \geq f(w_1) = f(z) \geq K.
    \end{align*}
    Since $K$ may be chosen to be arbitrarily large, $\sup\{f(x) : x \in \c{A/N} \cap \c{B}\} = \infty$.
\end{proof}

\begin{proof}[Proof of Proposition \ref{prop:FIN_k.iterable.valuation}]
    We first show that $f$ satisfies property (1). Suppose that $A$ is compatible with $B$. Let $C = (x_n)_{n<\omega}$ be such that $\c{C} \subseteq \c{A} \cap \c{B}$. Then $(f(x_n))_{n<\omega}$ is an unbounded sequence of natural numbers, so $\sup\{f(x) : x \in \c{A} \cap \c{B}\} = \infty$. 

    Conversely, suppose that $\sup\{f(x) : x \in \c{A} \cap \c{B}\} = \infty$. By Lemma \ref{lem:ad.iff.tail.disjoint}, it suffices to show that there exists a strictly increasing sequence $(N_i)_{i<\omega}$ of natural numbers such that $\c{A/N_i} \cap \c{B} \neq \emptyset$ for all $i$. Let $N_0 := 0$. If $\sup\{f(x) : x \in \c{A/N_i} \cap \c{B}\} = \infty$, by Lemma \ref{lem:FIN_k.iterable.valuation.main.lemma} there exists some $N_{i+1} > N_i$ such that $\sup\{f(x) : x \in \c{A/N_{i+1}} \cap \c{B}\} = \infty$. In particular, $\c{A/N_i} \cap \c{B} \neq \emptyset$ for all $i$, as desired.

    We show that $f$ satisfies property (2). Let $A = (x_n)_{n<\omega}$ and $B = (y_n)_{n<\omega}$ be almost disjoint, and let $a \in \FIN_k^{[<\infty]}$. Let $N$ be large enough so that $a < A/N$. By Lemma \ref{lem:ad.iff.tail.disjoint}, let $M$ be large enough such that $a < B/M$, $x_{N-1} < B/M$ and $\c{A} \cap \c{B/M} = \emptyset$. We claim that this $M$ works. Let $x \in \c{B/M}$, and let $y \in \c{a^\frown x} \cap \c{A}$. Since $y \in \c{a^\frown x}$, $y = T^\lambda(z) + T^\alpha(x)$ for some $z \in \c{a}$, and that $\lambda = 0$ or $\alpha = 0$. On the other hand, we write:
    \begin{align*}
        y = \sum_{n<\omega} T^{\lambda_n}(x_n) = \sum_{n < N} T^{\lambda_n}(x_n) + \sum_{n \geq N} T^{\lambda_n}(x_n).
    \end{align*}
    Since $\sum_{n < N} T^{\lambda_n}(x_n) < x$ and $a < \sum_{n \geq N} T^{\lambda_n}(x_n)$, we have that:
    \begin{align*}
        T^\lambda(w) &= \sum_{n < N} T^{\lambda_n}(x_n), \\
        T^\alpha(x) &= \sum_{n \geq N} T^{\lambda_n}(x_n).
    \end{align*}
    If $\alpha = 0$, then $x \in \c{A} \cap \c{B/M}$, a contradiction. Therefore, $y = z + T^\alpha(x)$ for some $\alpha > 0$, so $f(y) = f(z)$. Since $z \in \c{a} \cap \c{B}$, we have that $f(z) \leq \sup\{f(y) : y \in \c{a} \cap \c{A}\}$, as desired.
\end{proof}

\printbibliography[heading=bibintoc,title={References}]

\end{document}